\newtheorem{theorem}{Theorem}
\newtheorem{lemma}{Lemma}
\newtheorem{rem}{Remark}
\begin{document}
\author{Mark Pankov}
\title{Metric characterization of apartments in dual polar spaces}
\address{Department of Mathematics and Informatics, University of Warmia and Mazury,
{\. Z}olnierska 14A, 10-561 Olsztyn, Poland}
\email{pankov@matman.uwm.edu.pl}

\maketitle

\begin{abstract}
Let $\Pi$ be a polar space of rank $n$ and
let ${\mathcal G}_{k}(\Pi)$, $k\in \{0,\dots,n-1\}$
be the polar Grassmannian formed by $k$-dimensional singular subspaces of $\Pi$.
The corresponding Grassmann graph will be denoted by $\Gamma_{k}(\Pi)$.
We consider the polar Grassmannian ${\mathcal G}_{n-1}(\Pi)$
formed by maximal singular subspaces of $\Pi$ and
show that the image of every isometric embedding of the $n$-dimensional hypercube graph $H_{n}$
in $\Gamma_{n-1}(\Pi)$ is an apartment of ${\mathcal G}_{n-1}(\Pi)$.
This follows from a more general result (Theorem 2) concerning isometric embeddings of
$H_{m}$, $m\le n$ in $\Gamma_{n-1}(\Pi)$.
As an application, we classify all isometric embeddings of $\Gamma_{n-1}(\Pi)$
in $\Gamma_{n'-1}(\Pi')$, where $\Pi'$ is a polar space of rank $n'\ge n$ (Theorem 3).
\end{abstract}

\section{Introduction}
The problem discussed in this note was first considered in \cite{Pankov2}
and  motivated by the well-known metric characterization of apartments in Tits buildings (Theorem 1).
By \cite{Tits}, a {\it building} is a simplicial complex $\Delta$ containing a family of subcomplexes
called {\it apartments} and satisfying certain axioms.
One of the axioms says that all apartments are isomorphic to a certain Coxeter complex
--- the simplicial complex associated with a Coxeter system.
The diagram of this Coxeter system defines the type of the building $\Delta$.
%Our building is assumed to be {\it spherical},
%i.e. the associated Coxeter system is finite;
%in this case, apartments of $\Delta$ are finite.

Maximal simplices of $\Delta$, they are called {\it chambers},
have the same cardinal number $n$ (the rank of $\Delta$).
Two chambers are said to be {\it adjacent} if their intersection consists of $n-1$ vertices.
We write ${\rm Ch}(\Delta)$ for the set of all chambers and
denote by $\Gamma_{\rm ch}(\Delta)$ the graph
whose vertex set is ${\rm Ch}(\Delta)$ and whose edges are pairs of adjacent chambers.
Let ${\mathcal A}$ be the intersection of ${\rm Ch}(\Delta)$ with an apartment of $\Delta$
and let $\Gamma({\mathcal A})$ be the restriction of the graph $\Gamma_{\rm ch}(\Delta)$
to ${\mathcal A}$.

\begin{theorem}[\cite{Brown}, p. 90]
A subset of ${\rm Ch}(\Delta)$ is the intersection of ${\rm Ch}(\Delta)$
with an apartment of $\Delta$ if and only if it is
the image of an isometric embedding of $\Gamma({\mathcal A})$ in $\Gamma_{\rm ch}(\Delta)$.
\end{theorem}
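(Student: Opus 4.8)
The plan is to prove the two implications separately, after identifying the chamber graph $\Gamma(\mathcal{A})$ of an apartment with the Cayley graph of the Coxeter system $(W,S)$ of type $\Delta$: the chambers of an apartment correspond bijectively to the elements of $W$, two of them being adjacent precisely when the associated elements differ by right multiplication by a generator $s\in S$, and the gallery distance satisfies $d(C,D)=\ell(\delta(C,D))$, where $\delta$ denotes the $W$-valued Weyl distance on $\mathrm{Ch}(\Delta)$ and $\ell$ the word length.

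For the forward implication, let $\Sigma$ be an apartment and fix $C\in\mathrm{Ch}(\Sigma)$. I would first note that the inclusion $\mathrm{Ch}(\Sigma)\hookrightarrow\mathrm{Ch}(\Delta)$ cannot increase the gallery distance, since a gallery lying inside $\Sigma$ is in particular a gallery in $\Delta$. For the reverse inequality I would invoke the \emph{retraction} $\rho=\rho_{\Sigma,C}$ of $\Delta$ onto $\Sigma$ centered at $C$: it is a morphism of chamber systems, hence distance non-increasing, it fixes $\mathrm{Ch}(\Sigma)$ pointwise, and it carries a minimal gallery of $\Delta$ from $C$ to an arbitrary $D\in\mathrm{Ch}(\Sigma)$ onto a gallery of $\Sigma$ from $C$ to $D$ of no greater length. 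This gives $d_\Sigma(C,D)\le d_\Delta(C,D)$, so the two distances agree and the inclusion is an isometric embedding with image $\mathrm{Ch}(\Sigma)$.

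The converse is where the main obstacle lies. I am given an isometric embedding $f$ of $\Gamma(\mathcal{A})$ into $\Gamma_{\rm ch}(\Delta)$ with image $\mathcal{M}$, and must recognize $\mathcal{M}$ as an apartment; the difficulty is that $f$ preserves only the gallery distance $\ell(\delta(\cdot,\cdot))$, whereas an apartment is pinned down by the full Weyl distance $\delta$. For the spherical case relevant to polar spaces, $W$ is finite with longest element $w_{0}$, and the diameter of $\Delta$ equals $\ell(w_{0})$ and is attained inside every apartment. I would set $C=f(1)$ and $D=f(w_{0})$; then $d(C,D)=\ell(w_{0})$ is the diameter, so $C$ and $D$ are \emph{opposite} and hence lie in a unique common apartment $\Sigma$. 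Since $\ell(w)+\ell(w^{-1}w_{0})=\ell(w_{0})$ for every $w\in W$, each image $f(w)$ satisfies $d(C,f(w))+d(f(w),D)=d(C,D)$, so $f(w)$ lies on a minimal gallery from $C$ to $D$; by \emph{convexity} of apartments every such chamber belongs to $\mathrm{Ch}(\Sigma)$. Thus $\mathcal{M}\subseteq\mathrm{Ch}(\Sigma)$, and as $|\mathcal{M}|=|W|=|\mathrm{Ch}(\Sigma)|$ the two sets coincide.

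For a general (possibly infinite) $W$ the longest-element shortcut is unavailable, and I expect the genuine work to consist in showing directly that $f$ preserves the Weyl distance. The route I would pursue is to verify that $f$ sends minimal galleries to minimal galleries, which is immediate from the isometry property, and then to establish that $f$ is type-consistent, so that the type-word read along the image of a reduced gallery for $w$ is again a reduced word representing $w$; proving this rigidity of types, and thereby deducing $\delta(f(u),f(v))=u^{-1}v$ for all $u,v\in W$, is the step I anticipate to be the most delicate. Once the Weyl distance is reconstructed, $\mathcal{M}$ is an apartment by the standard characterization of apartments as the $\delta$-isometric copies of $W$ based at a chamber.
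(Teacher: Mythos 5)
A preliminary remark: the paper itself contains no proof of this statement --- it is imported verbatim from Brown's book (\cite{Brown}, p.~90) --- so your proposal can only be measured against the standard building-theoretic arguments, not against anything in the text. With that understood: your forward direction is the standard retraction argument and is correct. Your converse is correct and complete in the \emph{spherical} case: taking $C=f(1)$ and $D=f(w_{0})$, the identity $\ell(w)+\ell(w^{-1}w_{0})=\ell(w_{0})$ places every $f(w)$ on a minimal gallery from $C$ to $D$, convexity of apartments forces the image $\mathcal{M}$ into the unique apartment $\Sigma$ containing the opposite pair $C,D$, and the count $|\mathcal{M}|=|W|=|{\rm Ch}(\Sigma)|$ finishes. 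Since every building the paper actually uses (types $\textsf{A}_{n-1}$ and $\textsf{C}_{n}$) is spherical, this argument covers all applications made of Theorem 1 in this paper.

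However, the theorem is stated for an arbitrary building $\Delta$, and there your proposal has a genuine, self-acknowledged gap: the ``type rigidity'' step that would let you reconstruct the Weyl distance from the numerical gallery distance is never proved, only anticipated. Moreover, the precise claim you aim for, $\delta(f(u),f(v))=u^{-1}v$, is false as formulated: an isometric embedding of the Cayley graph need not be type-preserving (precompose with any diagram automorphism of $(W,S)$), so the best one can hope for is $\delta(f(u),f(v))=\sigma(u^{-1}v)$ for a single fixed automorphism $\sigma$ of the Coxeter system --- still enough to conclude, but the statement must be set up that way or the induction you envision will not close. There is also a point your sketch ignores in the non-spherical setting: apartment systems are then not unique, and the correct statement of the theorem concerns the \emph{complete} apartment system; an isometrically embedded Coxeter complex need not belong to a smaller prescribed system. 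A cleaner way to close the gap in full generality is to show that the image of $f$ is a thin, convex chamber subcomplex (convexity is where the real work lies, since isometry of the embedding does not immediately give convexity of the image) and then invoke the theorem that every thin convex chamber subcomplex isomorphic to a Coxeter complex is an apartment of the complete system --- which is essentially the route taken in the cited source.
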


The vertex set of $\Delta$ can be naturally decomposed in $n$ disjoint subsets called
{\it Grassmannians}:
the vertex set is labeled by the nodes of the associated diagram
(such a labeling is unique up to a permutation on the set of nodes) and
all vertices corresponding to the same node form a Grassmannian.
More general Grassmannians defined by parts of the diagram were investigated
in \cite{Pasini}.

Let ${\mathcal G}$ be a Grassmannian of $\Delta$.
We say that $a,b\in {\mathcal G}$ are {\it adjacent} if
there exists a simplex $P\in \Delta$ such that $P\cup\{a\}$ and $P\cup\{b\}$ both are
chambers;
in this case, the set of all $c\in {\mathcal G}$ such that $P\cup\{c\}$ is a chamber
will be called the {\it line} joining $a$ and $b$.
The Grassmannian  ${\mathcal G}$ together with the set of all such lines is a partial linear space;
it is called the {\it Grassmann space} corresponding to ${\mathcal G}$.
The associated {\it Grassmann graph} is the graph $\Gamma_{\mathcal G}$
whose vertex set is ${\mathcal G}$ and whose edges are pairs of adjacent vertices;
in other words, $\Gamma_{\mathcal G}$  is the collinearity graph of the Grassmann space.
It is well-known that this graph is connected.
The intersections of ${\mathcal G}$ with apartments of $\Delta$ are called
{\it apartments} of the Grassmannian ${\mathcal G}$.

In \cite{CS,CKS,Kasikova1,Kasikova2} apartments of some Grassmannians
were characterized in terms of the associated Grassmann spaces.
We interesting in a metric characterization of apartments in Grassmannians
similar to Theorem 1.

Every building of type $\textsf{A}_{n-1}$, $n\ge 4$ is
the flag complex of an $n$-dimensional vector space $V$ (over a division ring).
The Grassmannians of this building are the usual Grassmannians
${\mathcal G}_{k}(V)$, $k\in\{1,\dots,n-1\}$
formed by $k$-dimensional subspaces of $V$.
Two elements of ${\mathcal G}_{k}(V)$ are adjacent if their intersection is
$(k-1)$-dimensional. The associated Grassmann graph is denoted by $\Gamma_{k}(V)$.
If $k=1,n-1$ then any two distinct vertices of $\Gamma_{k}(V)$ are adjacent
and the corresponding  Grassmann space is the projective space $\Pi_{V}$ associated with $V$
or the dual projective space $\Pi^{*}_{V}$, respectively.
Every apartment of ${\mathcal G}_{k}(V)$  is defined by a certain base $B\subset V$:
it consists of all $k$-dimensional subspaces spanned by subsets of $B$.
For every $S,U\in {\mathcal G}_{k}(V)$ the distance $d(S,U)$
in $\Gamma_{k}(V)$ is equal to
$$k-\dim(S\cap U)$$
and all apartments of ${\mathcal G}_{k}(V)$ are the images of isometric embeddings of
the Johnson graph $J(n,k)$ in $\Gamma_{k}(V)$.
However, the image of every isometric embedding of $J(n,k)$ in $\Gamma_{k}(V)$
is an apartment of ${\mathcal G}_{k}(V)$ if and only if $n=2k$.
This follows from the classification
of isometric embeddings of Johnson graphs $J(l,m)$, $1<m<l-1$ in the Grassmann graph $\Gamma_{k}(V)$,
$1<k<n-1$ given in \cite{Pankov2}.

Every building of type $\textsf{C}_{n}$ is
the flag complex of a rank $n$ polar space $\Pi$, i.e.
it consists of all flags formed by singular subspaces of $\Pi$.
Apartments of this building are defined by frames of $\Pi$
and the associated Grassmannians are the polar Grassmannians
${\mathcal G}_{k}(\Pi)$, $k\in\{0,\dots,n-1\}$
consisting of $k$-dimensional singular subspaces of $\Pi$.
We restrict ourself to the Grassmannian ${\mathcal G}_{n-1}(\Pi)$ formed by maximal singular subspaces of $\Pi$.
Two elements of ${\mathcal G}_{n-1}(\Pi)$ are adjacent if their intersection is $(n-2)$-dimensional.
The corresponding  Grassmann graph is denoted by $\Gamma_{n-1}(\Pi)$.
The associated Grassmann space is known as the {\it dual polar space} of $\Pi$.
We show that apartments of ${\mathcal G}_{n-1}(\Pi)$
can be characterized as the images of isometric embeddings of
the $n$-dimensional hypercube graph $H_{n}$ in $\Gamma_{n-1}(\Pi)$.
This is a partial case of a more general result (Theorem 2) concerning isometric embeddings of $H_{m}$, $m\le n$
in $\Gamma_{n-1}(\Pi)$.
As an application,
we describe all isometric embeddings of $\Gamma_{n-1}(\Pi)$ in $\Gamma_{n'-1}(\Pi')$,
where $\Pi'$ is a polar space of rank $n'\ge n$ (Theorem 3).
This result generalizes classical Chow's theorem \cite{Chow}
on automorphisms of the graph $\Gamma_{n-1}(\Pi)$.

\section{Basics}

\subsection{Graph theory}
Let $\Gamma$ be a connected graph.
The {\it distance} $d(v,w)$ between two vertices $v,w\in \Gamma$ is defined
as the smallest number $i$ such that there exists a path of length $i$
(a path consisting of $i$ edges) between $v$ and $w$;
a path  connecting $v$ and $w$ is called a {\it geodesic} if it consists of
$d(v,w)$ edges.
The number
$$\max\{\;d(v,w)\;:\;v,w\in \Gamma\;\}$$
is called the {\it diameter} of $\Gamma$.
Suppose that it is finite.
Two vertices of $\Gamma$ are said to be {\it opposite}
if the distance between them is maximal
(is equal to the diameter).

An {\it isometric embedding} of a graph $\Gamma$ in a graph $\Gamma'$
is an injection of the vertex set of $\Gamma$ to the vertex set of $\Gamma'$
preserving the distance between vertices.
The existence of isometric embeddings of $\Gamma$ in $\Gamma'$
implies that the diameter of $\Gamma$ is not greater than  the diameter of $\Gamma'$.
An isometric embedding of $\Gamma$ in $\Gamma'$ is an isomorphism of $\Gamma$
to a subgraph of $\Gamma'$;
the converse fails,
an isomorphism of $\Gamma$ to a proper subgraph of $\Gamma'$
needs not to be an isometric embedding.
We refer \cite{Deza} for the general theory of isometric embeddings of graphs.

\subsection{Hypercube graphs}
Let
$$J:=\{1,\dots,n,-1,\dots,-n\}.$$
A subset $X\subset J$ is said to be {\it singular} if
$$i\in X\;\Longrightarrow\;-i\not\in X$$
for all $i\in I$.
Every maximal singular subset consists of $n$ elements and for every $i\in \{1, . . . , n\}$
it contains $i$ or $-i$.
The $n$-{\it dimensional  hypercube graph} $H_{n}$
can be defined as the graph whose vertices are maximal singular subsets of $J$
and two such subsets are adjacent (connected by an edge) if their intersection
consists of $n-1$ elements.
This graph is connected and for every maximal singular subsets $X,Y\subset J$
the distance $d(X,Y)$ in the graph $H_{n}$ is equal to
$$n-|X\cap Y|.$$
The diameter of $H_{n}$ is equal to $n$ and $X,Y$ are opposite vertices of $H_{n}$
if and only if $X\cap Y=\emptyset$.

\subsection{Partial linear spaces}
Let $P$ be a non-empty set and let ${\mathcal L}$ be a family of proper subsets of $P$.
Elements of $P$ and ${\mathcal L}$ will be called {\it points} and {\it lines}, respectively.
Two or more points are said to be {\it collinear} if there is a line containing all of them.
Suppose that the pair $\Pi=(P,{\mathcal L})$ is a {\it partial linear space},
i.e. the following axioms hold:
\begin{enumerate}
\item[$\bullet$]
each line contains at least two points,
\item[$\bullet$] every point belongs to a line,
\item[$\bullet$]
for any distinct collinear points $p,q\in P$ there is precisely one line containing them,
this line is denoted by $p\,q$.
\end{enumerate}
We say that $S\subset P$ is a {\it subspace} of $\Pi$
if for any distinct collinear points $p,q\in S$
the line $p\,q$ is contained in $S$.
A {\it singular} subspace is a subspace where any two points are collinear
(the empty set and a single point are singular subspaces).

For every subset $X\subset P$ the minimal subspace containing $X$
(the intersection of all subspaces containing $X$) is called {\it spanned} by $X$
and denoted by $\langle X\rangle$.
We say that $X$ is {\it independent} if $\langle X\rangle$ is not spanned by a proper subspace of $X$.

Let $S$ be a subspace of $\Pi$ (possible $S = P$).
An independent subset $X\subset S$ is called a {\it base} of $S$ if $\langle X\rangle=S$.
The {\it dimension} of $S$ is the smallest cardinality $\alpha$
such that $S$ has a base of cardinality $\alpha+1$.
The dimension of the empty set and a single point is equal to $-1$ and $0$ (respectively),
lines are $1$-dimensional subspaces.

Two partial linear spaces $\Pi=(P,{\mathcal L})$ and $\Pi'=(P',{\mathcal L}')$
are {\it isomorphic} if there exists a bijection $f:P\to P'$ such that $f({\mathcal L})={\mathcal L}'$;
this bijection is called a {\it collineation} of $\Pi$ to $\Pi'$.
We say that an injection of $P$ to $P'$ is an {\it embedding} of $\Pi$
in $\Pi'$ if it sends lines to subsets of lines such that distinct lines
go to subsets of distinct lines.

\subsection{Polar spaces}
Following \cite{BS},
we define a {\it polar space} as a partial linear space $\Pi=(P,{\mathcal L})$
satisfying the following axioms:
\begin{enumerate}
\item[$\bullet$] each line contains at least three points,
\item[$\bullet$] there is no point collinear with all points,
\item[$\bullet$] if $p\in P$ and $L\in {\mathcal L}$
then $p$ is collinear with one or all points of the line $L$,
\item[$\bullet$] any flag formed by singular subspaces is finite.
\end{enumerate}
If there exists a maximal singular subspace of $\Pi$ containing more than one line
then all maximal singular subspaces of $\Pi$ are projective spaces
of the same dimension $n\ge 2$;
the number $n+1$ is called the {\it rank} of $\Pi$.

The collinearity relation on $\Pi$ will be denoted by $\perp$:
we write $p\perp q$ if $p,q\in P$ are collinear and $p\not\perp q$ otherwise.
If $X,Y\subset P$ then $X\perp Y$ means that every point of $X$ is collinear with all points of $Y$.

\begin{lemma}\label{lemma0}
The following assertions are fulfilled:
\begin{enumerate}
\item[{\rm (1)}]
If $X\subset P$ and $X\perp X$
then the subspace $\langle X\rangle$ is  singular
and $p\perp X$ implies that $p\perp \langle X\rangle$.
\item[{\rm (2)}]
If $S$ is a maximal singular subspace of $\Pi$
then $p\perp S$ implies that $p\in S$.
\end{enumerate}
\end{lemma}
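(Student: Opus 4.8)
The plan is to reduce both assertions to a single structural fact: for every point $p\in P$, the set $p^{\perp}:=\{q\in P:q\perp p\}$ is a subspace of $\Pi$. This is where the ``one or all'' axiom does all the work. To verify it, take distinct collinear points $q,r\in p^{\perp}$ and let $L=q\,r$ be the line they span. Since $p\perp q$ and $p\perp r$, the point $p$ is collinear with at least two points of $L$; the third axiom then forbids the ``one'' alternative, so $p$ is collinear with \emph{all} points of $L$, i.e.\ $L\subseteq p^{\perp}$. Hence $p^{\perp}$ is a subspace, and consequently, for any set $Y\subseteq P$, the set $Y^{\perp}=\bigcap_{y\in Y}y^{\perp}$ is a subspace as an intersection of subspaces.

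Granting this, the implication $p\perp X\Rightarrow p\perp\langle X\rangle$ in (1) is immediate: $p\perp X$ says $X\subseteq p^{\perp}$, and since $p^{\perp}$ is a subspace it contains $\langle X\rangle$, which is exactly $p\perp\langle X\rangle$. The singularity of $\langle X\rangle$ I would then obtain by a two-step bootstrap. First, for each $x\in X$ we have $x\perp X$ (because $X\perp X$), whence $x\perp\langle X\rangle$ by the implication just proved; by symmetry this gives $X\perp\langle X\rangle$. Second, fix any $q\in\langle X\rangle$: from $X\perp\langle X\rangle$ we get $q\perp X$, and applying the implication once more yields $q\perp\langle X\rangle$. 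As $q$ was arbitrary, any two points of $\langle X\rangle$ are collinear, i.e.\ $\langle X\rangle$ is singular.

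For (2) I would combine (1) with maximality. If $S$ is a maximal singular subspace and $p\perp S$, then every pair of points in $S\cup\{p\}$ is collinear (pairs inside $S$ because $S$ is singular, pairs involving $p$ because $p\perp S$), so $(S\cup\{p\})\perp(S\cup\{p\})$. By (1) the span $\langle S\cup\{p\}\rangle$ is a singular subspace, and it contains $S$; maximality of $S$ forces $\langle S\cup\{p\}\rangle=S$, so $p\in S$.

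The only genuinely delicate point is the very first step --- verifying that $p^{\perp}$ is a subspace from the ``one or all'' axiom --- and even there the argument is short once one notices that $p\perp q$ and $p\perp r$ already exhibit two points of the line $q\,r$ on which $p$ is collinear. Everything after that is formal: the implication about spans, the bootstrap to singularity, and the maximality argument each reuse the same subspace property. A minor bookkeeping matter is the convention that $\perp$ is reflexive (so that a single point, and $p$ with itself, cause no trouble in the phrase $X\perp X$); this does not affect the singularity conclusion, which only concerns distinct points.
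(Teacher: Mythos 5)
Your proof is correct and complete. The paper itself offers no argument for this lemma---it simply cites Subsection 4.1.1 of \cite{Pankov1}---so your write-up supplies exactly the argument that reference contains: the key observation that $p^{\perp}$ is a subspace (forced by the one-or-all axiom, since collinearity with two distinct points of a line rules out the ``one'' alternative), from which the implication $p\perp X\Rightarrow p\perp\langle X\rangle$ follows by minimality of the span, the singularity of $\langle X\rangle$ follows by the two-step bootstrap, and part (2) follows from maximality applied to the singular subspace $\langle S\cup\{p\}\rangle$. This is the standard route, executed without gaps; your closing remark about reflexivity of $\perp$ is the right convention to flag, and it is indeed harmless since singularity only concerns distinct points.
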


\begin{proof}
See, for example, Subsection 4.1.1 \cite{Pankov1}.
\end{proof}

\subsection{Dual polar spaces}
Let $\Pi=(P,{\mathcal L})$ be a polar space of rank $n\ge 3$.
For every $k\in\{0,1,\dots,n-1\}$ we denote by ${\mathcal G}_{k}(\Pi)$
the Grassmannian consisting of all $k$-dimensional singular subspaces of $\Pi$.
Then ${\mathcal G}_{n-1}(\Pi)$ is formed by maximal singular subspaces of $\Pi$.
Recall two elements of ${\mathcal G}_{n-1}(\Pi)$ are adjacent if their intersection
is $(n-2)$-dimensional and the associated Grassmann graph is denoted by $\Gamma_{n-1}(\Pi)$.

Let $M$ be an $m$-dimensional singular subspace of $\Pi$.
If $m<k$ then we write $[M\rangle_{k}$ for the set of all elements of ${\mathcal G}_{k}(\Pi)$
containing $M$. In the case when $m=n-2$, the subset $[M\rangle_{n-1}$ is called a {\it line}
of ${\mathcal G}_{n-1}(\Pi)$. The Grassmannian ${\mathcal G}_{n-1}(\Pi)$ together with the set
of all such lines is a partial linear space; it is called the {\it dual polar space} of $\Pi$.
Two distinct points of the dual polar space are collinear if and only if they are
adjacent elements of ${\mathcal G}_{n-1}(\Pi)$.
Note that every maximal singular subspace of the dual polar space is a line.

Let $M$ be as above.
If $m<n-2$ then $[M\rangle_{n-1}$ is a non-singular subspace of the dual polar space.
Subspaces of such type are called {\it parabolic} \cite{CKS}.
We will use the following fact: {\it the parabolic subspace $[M\rangle_{n-1}$ is isomorphic
to the dual polar space of a rank $n-m-1$ polar space.}

Consider $[M\rangle_{m+1}$.
A subset ${\mathcal X}\subset [M\rangle_{m+1}$
is called a {\it line} if there exists $N\in [M\rangle_{m+2}$
such that ${\mathcal X}$ consists of all elements of $[M\rangle_{m+1}$ contained in $N$.
Then $[M\rangle_{m+1}$ together with the set of all such lines is a polar space of
rank $n-m-1$ (Lemma 4.4 \cite{Pankov1}).
If ${\mathcal Y}\subset [M\rangle_{m+1}$ is a maximal singular subspace of this polar space
then there exists $S\in [M\rangle_{n-1}$
such that ${\mathcal Y}$ consists of all elements of $[M\rangle_{m+1}$ contained in $S$.
So, we can identify maximal singular subspaces of $[M\rangle_{m+1}$
with elements of $[M\rangle_{n-1}$.
This correspondence is a collineation  between
$[M\rangle_{n-1}$ and the dual polar space of $[M\rangle_{m+1}$
(Example 4.6 \cite{Pankov1}).

For every $S,U\in {\mathcal G}_{n-1}(\Pi)$ the distance $d(S,U)$
in the graph $\Gamma_{n-1}(\Pi)$ is equal to
$$n-1-\dim(S\cap U).$$
The diameter of $\Gamma_{n-1}(\Pi)$ is equal to $n$ and two vertices of $\Gamma_{n-1}(\Pi)$
are opposite if and only if they are disjoint elements of ${\mathcal G}_{n-1}(\Pi)$.

\section{Apartments of dual polar spaces}

\subsection{Main result}
Let $\Pi=(P,{\mathcal L})$ be a polar space of rank $n$.
Apartments of ${\mathcal G}_{k}(\Pi)$ are defined by frames of $\Pi$.
%For every subset ${\mathcal X}\subset {\mathcal G}_{n-1}(\Pi)$
%we denote by $\Gamma({\mathcal X})$ the restriction of the graph $\Gamma_{n-1}(\Pi)$
%to ${\mathcal X}$.
A subset $\{p_{1},\dots,p_{2n}\}\subset P$ is called a {\it frame}
if for every $i\in\{1\,\dots,2n\}$ there exists unique $\sigma(i)\in\{1\,\dots,2n\}$
such that $p_{i}\not\perp p_{\sigma(i)}$.
Frames are independent subsets of $\Pi$.
This guarantees that any $k$ mutually collinear points in a frame
span a $(k-1)$-dimensional singular subspace.

Let $B=\{p_{1},\dots,p_{2n}\}$ be a frame of $\Pi$.
The associated apartment ${\mathcal A}\subset {\mathcal G}_{n-1}(\Pi)$
is formed by all maximal singular subspaces spanned by subsets of $B$ ---
the subspaces of type $\langle p_{i_{1}},\dots, p_{i_{n}}\rangle$ such that
$$\{i_{1},\dots, i_{n}\}\cap \{\sigma(i_{1}),\dots, \sigma(i_{n})\}=\emptyset.$$
Thus every element of ${\mathcal A}$ contains precisely one of the points $p_{i}$ or $p_{\sigma(i)}$
for each $i$.
By Subsection 2.2, ${\mathcal A}$ is the image of an isometric embedding of
$H_{n}$ in $\Gamma_{n-1}(\Pi)$.

Let $M$ be an $(n-m-1)$-dimensional singular subspace of $\Pi$
and let $B$ be a frame of $\Pi$ such that $M$ is spanned by a subset of $B$.
The intersection of the associated apartment of ${\mathcal G}_{n-1}(\Pi)$
with the parabolic subspace $[M\rangle_{n-1}$ is called an {\it apartment} of $[M\rangle_{n-1}$.
This parabolic subspace can be identified with the dual polar space
of the rank $m$ polar space $[M\rangle_{n-m}$ (Subsection 2.5)
and every apartment of $[M\rangle_{n-1}$ is defined by a frame of this polar space
(\cite{Pankov1}, p. 180).
All apartments of $[M\rangle_{n-1}$ are the images of isometric embeddings of $H_{m}$ in $\Gamma_{n-1}(\Pi)$.

\begin{theorem}\label{theorem2}
The image of every isometric embedding of $H_{m}$, $m\le n$
in $\Gamma_{n-1}(\Pi)$ is an apartment in a parabolic subspace $[M\rangle_{n-1}$, where
$M$ is an $(n-m-1)$-dimensional singular subspace of $\Pi$.
In particular, the image of every isometric embedding of $H_{n}$
in $\Gamma_{n-1}(\Pi)$ is an apartment of ${\mathcal G}_{n-1}(\Pi)$.
\end{theorem}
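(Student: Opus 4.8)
The plan is to proceed in two stages: first, to locate the singular subspace $M$ and reduce the general statement to the \emph{full-dimensional} case $m=n$; second, to settle that case by induction on the rank.

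Let $f$ be an isometric embedding of $H_{m}$ in $\Gamma_{n-1}(\Pi)$ and fix a pair $X_{0},X_{0}'$ of opposite vertices of $H_{m}$. Since $X_{0}\cap X_{0}'=\emptyset$ and these two maximal singular subsets of $J$ partition the coordinates, every vertex $X$ of $H_{m}$ satisfies $|X\cap X_{0}|+|X\cap X_{0}'|=m$; hence $d(X,X_{0})+d(X,X_{0}')=m=d(X_{0},X_{0}')$, so every vertex lies on a geodesic between $X_{0}$ and $X_{0}'$. Writing $S_{0}=f(X_{0})$, $S_{0}'=f(X_{0}')$ and $M:=S_{0}\cap S_{0}'$, the distance formula of Subsection 2.5 gives $\dim M=n-1-m$. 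I would then show that the whole image lies in $[M\rangle_{n-1}$: for any $S=f(X)$ the geodesic condition reads $\dim(S_{0}\cap S)+\dim(S_{0}'\cap S)=2(n-1)-m$, and applying the projective dimension formula inside the $(n-1)$-dimensional space $S$ to the subspaces $S_{0}\cap S$ and $S_{0}'\cap S$ yields $\dim(M\cap S)\ge n-1-m=\dim M$, whence $M\subseteq S$.

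Because every image contains $M$, I would pass to the parabolic subspace $[M\rangle_{n-1}$, which by Subsection 2.5 is the dual polar space of a rank $m$ polar space. A direct computation with the correspondence of that subsection shows that for $S,U\in[M\rangle_{n-1}$ the distance $n-1-\dim(S\cap U)$ in $\Gamma_{n-1}(\Pi)$ coincides with the distance between the corresponding maximal singular subspaces of the rank $m$ dual polar space; thus $[M\rangle_{n-1}$ is isometrically embedded in $\Gamma_{n-1}(\Pi)$, and $f$ becomes an isometric embedding of $H_{m}$ in the Grassmann graph of a rank $m$ polar space. This reduces the theorem to the assertion that the image of an isometric embedding of $H_{n}$ in $\Gamma_{n-1}(\Pi)$ \emph{with the hypercube dimension equal to the rank} is an apartment of ${\mathcal G}_{n-1}(\Pi)$.

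For this full-dimensional case I would argue by induction on $n$, the case of small $n$ being immediate. Given an isometric embedding $f$ of $H_{n}$, restrict it to an opposite pair of facets of the hypercube, each an isometrically embedded copy of $H_{n-1}$. By the inductive hypothesis combined with the reduction of the preceding paragraph applied at $m=n-1$, the image of each facet is an apartment in a parabolic subspace with $\dim M=0$; that is, there are points $p,p'$ such that every image of the first facet contains $p$ and every image of the second contains $p'$, and each facet-image is an apartment of the rank $n-1$ dual polar space $[p\rangle_{n-1}$, respectively $[p'\rangle_{n-1}$, hence defined by a frame. It then remains to glue these two frames into a single frame $B$ of $\Pi$ realizing the whole image: using that each vertex of the first facet is adjacent in $H_{n}$ to its partner in the second, together with Lemma \ref{lemma0} and the polar axioms, one checks that $p\not\perp p'$, that the two $(n-1)$-point frames together with $p,p'$ constitute a frame of $\Pi$, and that the apartment it determines is exactly the image of $f$. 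The main obstacle is precisely this gluing step: verifying the perpendicularity pattern of the $2n$ assembled points and confirming that no element of the image falls outside the resulting apartment.
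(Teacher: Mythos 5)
Your first stage is sound: since opposite vertices $X_{0},X_{0}'$ of $H_{m}$ partition the coordinate set, every vertex is metrically between them, and your dimension-formula computation inside the projective space $f(X)$ correctly yields $M=f(X_{0})\cap f(X_{0}')\subseteq f(X)$ for every vertex $X$. This is a legitimate (and slightly more streamlined) substitute for the paper's Lemmas \ref{lemma1}--\ref{lemma3}, which obtain the same containment by inducting along a geodesic using the collinearity properties of Lemma \ref{lemma0}; together with your distance check for the parabolic subspace, it gives the same reduction to the case $m=n$ that the paper makes.

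The gap is in your second stage, which is where the real content of the theorem lies. Your induction gives that the images of the two opposite facets $F_{+}$ and $F_{-}$ are apartments of $[p\rangle_{n-1}$ and $[p'\rangle_{n-1}$, but these apartments are defined by frames of the rank $n-1$ polar spaces $[p\rangle_{1}$ and $[p'\rangle_{1}$, i.e.\ by two sets of $2(n-1)$ \emph{lines} through $p$, respectively $p'$ --- not by points of $\Pi$. To assemble a frame of $\Pi$ you must (i) match the two line-frames via the hypercube matching between the facets, (ii) prove that each matched pair of lines actually meets in a point $q_{j}$, (iii) verify the collinearity pattern of the $2n$ points $q_{1},\dots,q_{2(n-1)},p,p'$, in particular $p\not\perp p'$ and $q_{j}\not\perp q_{\sigma(j)}$, and (iv) show that each $f(X)$ is spanned by the points assigned to it, so that no element of the image escapes the resulting apartment. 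None of this is a routine check: step (ii) needs the betweenness containment applied to vertex pairs lying in \emph{different} facets (e.g.\ $X\in F_{+}$, $Y\in F_{-}$ with $X\cap Y=\{j\}$, whose images meet in a single point which must be shown to lie in $f(Z)$ for every $Z\ni j$), and step (iii) needs the separate argument that $q_{j}\perp q_{\sigma(j)}$ would force $q_{j}$ into every element of the image, contradicting the existence of disjoint elements. You flag this gluing as ``the main obstacle'' and leave it unproved; it is precisely the content of the paper's Subsection 3.3 (the construction of $q_{1},\dots,q_{2n}$, Lemma \ref{lemma4}, and the closing frame verification). Note that the paper sidesteps the gluing problem entirely by applying the containment lemma to \emph{all} $2n$ facets simultaneously, which produces all $2n$ candidate frame points at once with no induction on rank; if you replace your two-facet induction by that device, your stage-one result does complete the proof along the paper's lines.
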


\begin{rem}{\rm
There are no isometric embeddings of $H_{m}$ in $\Gamma_{n-1}(\Pi)$ if $m>n$
(in this case, the diameter of $H_{m}$ is greater than the diameter of $\Gamma_{n-1}(\Pi)$).
However, there exists a subset ${\mathcal X}\subset {\mathcal G}_{n-1}(\Pi)$
such that the restriction of the graph $\Gamma_{n-1}(\Pi)$ to ${\mathcal X}$ is isomorphic to $H_{n+1}$
\cite{CS} (Example 2, Section 9).
}\end{rem}

\subsection{Lemmas}
To prove Theorem \ref{theorem2} we will use the following lemmas.

\begin{lemma}\label{lemma1}
If $X_{0},X_{1},\dots,X_{m}$ is a geodesic in $\Gamma_{n-1}(\Pi)$
then
$$X_{0}\cap X_{m}\subset X_{i}$$
for every $i\in \{1,\dots,m-1\}$.
\end{lemma}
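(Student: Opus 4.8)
The plan is to reduce the statement to a single dimension count carried out inside one maximal singular subspace. First I would use the elementary fact that every subpath of a geodesic is itself a geodesic, so that $X_{0},\dots,X_{i}$ and $X_{i},\dots,X_{m}$ are geodesics of lengths $i$ and $m-i$ respectively. Combining this with the distance formula $d(S,U)=n-1-\dim(S\cap U)$ from Subsection 2.5 pins down the exact dimensions
$$\dim(X_{0}\cap X_{i})=n-1-i,\qquad \dim(X_{i}\cap X_{m})=n-1-(m-i),\qquad \dim(X_{0}\cap X_{m})=n-1-m.$$

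Write $W:=X_{0}\cap X_{m}$; the goal is to show $W\subseteq X_{i}$, equivalently $W\cap X_{i}=W$. The two subspaces $X_{0}\cap X_{i}$ and $X_{i}\cap X_{m}$ both lie inside the maximal singular subspace $X_{i}$, which (since $\Pi$ has rank $n\ge 3$) is a projective space of dimension $n-1$. Their set-theoretic intersection is exactly $X_{0}\cap X_{i}\cap X_{m}=W\cap X_{i}$, while their join is a subspace of $X_{i}$ and hence has dimension at most $n-1$. Applying the projective dimension identity $\dim U+\dim V=\dim(U\vee V)+\dim(U\cap V)$ inside $X_{i}$, with $U=X_{0}\cap X_{i}$ and $V=X_{i}\cap X_{m}$, then gives
$$\dim(W\cap X_{i})\ \ge\ (n-1-i)+\bigl(n-1-(m-i)\bigr)-(n-1)\ =\ n-1-m\ =\ \dim W.$$

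Since $W\cap X_{i}$ is a subspace of $W$ whose dimension is at least $\dim W$, it must coincide with $W$, giving the desired inclusion $W\subseteq X_{i}$. I expect the only delicate point to be the bookkeeping of the three dimensions: the modularity inequality is only strong enough because the values $\dim(X_{0}\cap X_{i})$ and $\dim(X_{i}\cap X_{m})$ are forced to their exact maximal-for-a-geodesic sizes, which is precisely what the subpath-geodesic property and the distance formula supply. No genuine obstacle remains beyond observing that all three intersecting subspaces live in the single projective space $X_{i}$, which holds because each $X_{j}$ is a maximal singular subspace.
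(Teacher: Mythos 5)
Your proof is correct, but it follows a genuinely different route from the paper's. The paper proves Lemma~\ref{lemma1} by induction along the geodesic: assuming $M:=X_{0}\cap X_{m}\subset X_{i-1}$, it picks a point $p\in(X_{i}\cap X_{m})\setminus X_{i-1}$ (which exists because $\dim(X_{i}\cap X_{m})>\dim(X_{i-1}\cap X_{m})$), notes that $X_{i}$ is spanned by the hyperplane $X_{i-1}\cap X_{i}$ together with $p$, observes $(X_{i-1}\cap X_{i})\perp M$ and $p\perp M$, and concludes $M\perp X_{i}$ and hence $M\subset X_{i}$ from the two parts of Lemma~\ref{lemma0}. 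So the paper's argument is collinearity-theoretic and stays entirely inside the toolkit it has already set up (the distance formula plus Lemma~\ref{lemma0}); the same ``hyperplane plus outside point'' device is reused almost verbatim in the proof of Lemma~\ref{lemma5}. Your argument avoids induction altogether: the subpath-geodesic property pins down the exact dimensions of $X_{0}\cap X_{i}$ and $X_{i}\cap X_{m}$, and the dimension identity in the projective space $X_{i}$ forces $\dim(W\cap X_{i})\ge\dim W$, hence $W\subset X_{i}$. This is shorter and purely metric/dimension-theoretic, but it imports one external fact the paper never needs: the exact identity $\dim U+\dim V=\dim(U\vee V)+\dim(U\cap V)$, i.e.\ modularity of the subspace lattice of a (finite-dimensional, possibly non-Desarguesian) projective space. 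Be aware that mere submodularity of a rank function gives the inequality in the wrong direction ($\dim(U\cap V)\le\dim U+\dim V-\dim(U\vee V)$), so your step genuinely requires modularity; this is classical and unproblematic, since maximal singular subspaces of a polar space of rank $\ge 3$ are projective spaces, but in a paper built on the Buekenhout--Shult axioms it would merit an explicit reference, whereas the paper's $\perp$-argument needs nothing beyond its own Lemma~\ref{lemma0}.
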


\begin{proof}
We prove induction by $i$ that $M:=X_{0}\cap X_{m}$
is contained in every $X_{i}$. The statement is trivial if $i=0$.
Suppose that $i\ge 1$ and $M\subset X_{i-1}$.

Since $X_{0},X_{1},\dots,X_{m}$ is a geodesic,
we have $$d(X_{i},X_{m})< d(X_{i-1},X_{m})$$
and, by the distance formula (Subsection 2.5),
$$\dim (X_{i}\cap X_{m})>\dim (X_{i-1}\cap X_{m}).$$
The latter implies the existence of a point
$$p\in (X_{i}\cap X_{m})\setminus X_{i-1}.$$
Then $X_{i}$ is spanned by the $(n-2)$-dimensional singular subspace
$X_{i-1}\cap X_{i}$ and the point $p$.
On the other hand,
$$(X_{i-1}\cap X_{i})\perp M$$
($M$ is contained in $X_{i-1}$ by inductive hypothesis)
and $p\perp M$ ($p$ and $M$ both are contained in $X_{m}$).
By the first part of Lemma \ref{lemma0}, $X_{i}\perp M$.
Since $X_{i}$ is a maximal singular subspace, the second part of Lemma \ref{lemma0}
guarantees that $M\subset X_{i}$.
\end{proof}

\begin{lemma}\label{lemma2}
In the hypercube graph $H_{m}$ for every vertex $v$ there is a unique vertex opposite to $v$.
If vertices $v,w\in H_{m}$ are opposite  then for every vertex
$u\in H_{m}$
there is a geodesic connecting $v$ with $w$ and passing throughout $u$.
\end{lemma}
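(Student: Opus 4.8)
The plan is to pass to the natural sign-coordinate description of the vertices of $H_{m}$. Since each maximal singular subset $X\subset J$ contains exactly one of $i,-i$ for every $i\in\{1,\dots,m\}$, I would encode $X$ by the sign vector $\varepsilon_{X}\in\{+1,-1\}^{m}$ defined by $\varepsilon_{X}(i)=+1$ if $i\in X$ and $\varepsilon_{X}(i)=-1$ if $-i\in X$. Under this encoding the intersection $X\cap Y$ consists precisely of the coordinates where $\varepsilon_{X}$ and $\varepsilon_{Y}$ agree, so the distance formula $d(X,Y)=m-|X\cap Y|$ (Subsection 2.2) becomes the Hamming distance
$$d(X,Y)=\#\{\,i:\varepsilon_{X}(i)\neq\varepsilon_{Y}(i)\,\}.$$
Thus $H_{m}$ is isomorphic to the graph on $\{+1,-1\}^{m}$ with adjacency given by differing in a single coordinate, and I would carry out the whole argument in these terms.

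For the first assertion I would observe that $v=X$ and $w=Y$ are opposite exactly when $d(X,Y)=m$, that is, when $\varepsilon_{X}$ and $\varepsilon_{Y}$ differ in every coordinate. This forces $\varepsilon_{Y}=-\varepsilon_{X}$, equivalently $Y=-X:=\{-j:j\in X\}$, which is itself a maximal singular subset disjoint from $X$. Hence an opposite vertex exists and is uniquely determined as $-X$.

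For the second assertion I would fix opposite vertices $v=X$, $w=Y=-X$ and an arbitrary $u=Z$, and partition $\{1,\dots,m\}$ into the set $A$ of coordinates where $\varepsilon_{Z}$ agrees with $\varepsilon_{X}$ and the set $B$ where it disagrees. Since $\varepsilon_{Y}=-\varepsilon_{X}$, the coordinates where $\varepsilon_{Z}$ agrees with $\varepsilon_{Y}$ are exactly $B$ and those where it disagrees are exactly $A$, so that
$$d(X,Z)+d(Z,Y)=|B|+|A|=m=d(X,Y),$$
which shows $Z$ lies metrically between $X$ and $Y$. To realize this by an actual path I would construct a geodesic by flipping one coordinate at a time: starting from $X$, flip the coordinates in $B$ one by one (reaching $Z$ after $|B|=d(X,Z)$ edges), then flip the coordinates in $A$ one by one (reaching $-X=Y$ after a further $|A|=d(Z,Y)$ edges).

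The argument is essentially routine once the coordinate encoding is in place; the only point requiring genuine care is this last step, namely verifying that concatenating a geodesic $X\to Z$ with a geodesic $Z\to Y$ produces a true geodesic rather than merely a walk of length $m$. This is exactly where the opposition hypothesis $Y=-X$ enters: it guarantees that the coordinate sets $B$ and $A$ flipped in the two halves are disjoint and complementary, so every coordinate is toggled exactly once, the endpoint is indeed $Y$, and the total length equals $m=d(X,Y)$. Hence the path is a geodesic connecting $v$ and $w$ and passing through $u$, as required.
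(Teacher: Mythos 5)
Your proof is correct: the sign-vector encoding, the identification of the unique opposite vertex as $-X$, and the coordinate-flipping construction of a geodesic through an arbitrary $Z$ are all sound, including the key check that the concatenated path has length exactly $m=d(X,Y)$ and hence is a geodesic. The paper dismisses this lemma with ``an easy verification,'' so your argument simply supplies, correctly, the routine details the author chose to omit.
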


\begin{proof}
An easy verification.
\end{proof}

\begin{lemma}\label{lemma3}
The image of every isometric embedding of $H_{m}$, $m\le n$ in $\Gamma_{n-1}(\Pi)$
is contained in a parabolic subspace $[M\rangle_{n-1}$,
where $M$ is an $(n-m-1)$-dimensional singular subspace of $\Pi$.
\end{lemma}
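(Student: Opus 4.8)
The plan is to show that the image of an isometric embedding $f\colon H_m \to \Gamma_{n-1}(\Pi)$ lies in $[M\rangle_{n-1}$ for some $(n-m-1)$-dimensional singular subspace $M$, which is the common intersection of all elements in the image. The natural candidate for $M$ is the intersection of a pair of opposite vertices of $H_m$: if $v,w$ are opposite in $H_m$, then $d(v,w)=m$, so the distance formula gives $\dim(f(v)\cap f(w)) = n-1-m$, i.e. $M := f(v)\cap f(w)$ is an $(n-m-1)$-dimensional singular subspace. The core of the argument will be to prove that $M$ is contained in $f(u)$ for \emph{every} vertex $u \in H_m$, so that the whole image sits inside $[M\rangle_{n-1}$.

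First I would fix a pair of opposite vertices $v,w$ of $H_m$ and set $M := f(v)\cap f(w)$, of dimension $n-1-m$ as above. By Lemma \ref{lemma2}, any vertex $u \in H_m$ lies on a geodesic from $v$ to $w$; since $f$ is an isometric embedding, it sends this geodesic to a geodesic $f(v),\dots,f(u),\dots,f(w)$ in $\Gamma_{n-1}(\Pi)$. Then Lemma \ref{lemma1} applies directly: the endpoint intersection $f(v)\cap f(w) = M$ is contained in every intermediate term, in particular in $f(u)$. Since $u$ was arbitrary, $M \subset f(u)$ for all $u$, and hence the image of $f$ is contained in $[M\rangle_{n-1}$.

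The one point requiring care is that the candidate subspace $M$ must be independent of the choice of opposite pair used to define it, or at least that the single choice suffices; but the argument above already shows $M \subset f(u)$ for every $u$, and in particular $M$ is contained in $f(v')\cap f(w')$ for any other opposite pair $v',w'$. Comparing dimensions (both intersections have dimension $n-1-m$ by the distance formula) forces equality, so $M$ is canonically the common core of the image, confirming consistency. The step I expect to be the genuine content rather than bookkeeping is the invocation of Lemma \ref{lemma1}, which does the geometric work of propagating the intersection along a geodesic; the uniqueness of opposites and the geodesic-through-every-vertex property from Lemma \ref{lemma2} are exactly what lets me reduce the arbitrary vertex $u$ to that lemma's hypothesis. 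Once containment in $[M\rangle_{n-1}$ is established here, the sharper conclusion of Theorem \ref{theorem2} --- that the image is in fact an \emph{apartment} of this parabolic subspace --- will be taken up afterward using the identification of $[M\rangle_{n-1}$ with the dual polar space of the rank $m$ polar space $[M\rangle_{n-m}$.
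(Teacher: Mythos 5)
Your proposal is correct and follows exactly the paper's own argument: take opposite vertices $v,w$ of $H_{m}$, set $M:=f(v)\cap f(w)$ (which is $(n-m-1)$-dimensional by the distance formula), and combine Lemma \ref{lemma2} (a geodesic from $v$ to $w$ through any vertex $u$) with Lemma \ref{lemma1} (endpoint intersections propagate along geodesics) to get $M\subset f(u)$ for all $u$. Your closing remark about independence of the choice of opposite pair is a harmless extra observation not needed for the statement, which only asserts the existence of some such $M$.
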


\begin{proof}
Let $f$ be an isometric embedding of $H_{m}$ in $\Gamma_{n-1}(\Pi)$.
We take any opposite vertices $v,w\in H_{m}$.
Then $$d(f(v),f(w))=m$$ and, by the distance formula (Subsection 2.5),
$$M:=f(v)\cap f(w)$$
is an $(n-m-1)$-dimensional singular subspace of $\Pi$.
Lemmas \ref{lemma1} and \ref{lemma2} show that
$M$ is contained in $f(u)$ for every $u\in H_{m}$.
\end{proof}

\subsection{Proof of Theorem \ref{theorem2}}
If $M$ is an $(n-m-1)$-dimensional singular subspace of $\Pi$
then $[M\rangle_{n-m}$ is a polar space of rank $m$ and $[M\rangle_{n-1}$ can be
identified with the associated dual polar space (Subsection 2.5);
moreover, every apartment of $[M\rangle_{n-1}$ is defined by a frame of
the polar space $[M\rangle_{n-m}$.
Therefore, by Lemma \ref{lemma3},
it is sufficient to prove Theorem \ref{theorem2} only in the case when $m=n$.

Let $\{p_{1},\dots,p_{2n}\}$ be a frame of $\Pi$.
Denote by ${\mathcal A}$  the associated apartment of ${\mathcal G}_{n-1}(\Pi)$.
The restriction of $\Gamma_{n-1}(\Pi)$ to ${\mathcal A}$ is isomorphic to $H_{n}$.
Suppose that $f:{\mathcal A}\to {\mathcal G}_{n-1}(\Pi)$
is an injection which induces an isometric embedding of
$H_{n}$ in $\Gamma_{n-1}(\Pi)$.
Let ${\mathcal X}$ be the image of $f$.

Each ${\mathcal A}\cap [p_{i}\rangle_{n-1}$ is an apartment in
the parabolic subspace $[p_{i}\rangle_{n-1}$.
Since $[p_{i}\rangle_{n-1}$ is the dual polar space of
the rank $n-1$ polar space $[p_{i}\rangle_{1}$ (Subsection 2.5),
the restriction of $\Gamma_{n-1}(\Pi)$ to ${\mathcal A}\cap [p_{i}\rangle_{n-1}$
is isomorphic to $H_{n-1}$.
Lemma \ref{lemma3} implies the existence of points $q_{1},\dots, q_{2n}$ such that
$$f({\mathcal A}\cap [p_{i}\rangle_{n-1})\subset{\mathcal X}\cap [q_{i}\rangle_{n-1}$$
for every $i$.

For every $X\in {\mathcal A}\setminus [p_{i}\rangle_{n-1}$
there exists $Y\in {\mathcal A}\cap [p_{i}\rangle_{n-1}$ disjoint from $X$.
We have
$$d(X,Y)=d(f(X),f(Y))=n.$$
Then $f(X)$ and $f(Y)$ are disjoint and $q_{i}\in f(Y)$.
This means that $q_{i}\not\in f(X)$ and $f(X)$ does not belong to ${\mathcal X}\cap[q_{i}\rangle_{n-1}$.
Since $f({\mathcal A})={\mathcal X}$,
we get the equality
$$f({\mathcal A}\cap [p_{i}\rangle_{n-1})={\mathcal X}\cap [q_{i}\rangle_{n-1}.$$
If $i\ne j$ then ${\mathcal A}\cap [p_{i}\rangle_{n-1}$ and ${\mathcal A}\cap [p_{j}\rangle_{n-1}$
are distinct subsets of ${\mathcal A}$ and their images
${\mathcal X}\cap [q_{i}\rangle_{n-1}$ and ${\mathcal X}\cap [q_{j}\rangle_{n-1}$ are distinct.

Therefore, $q_{i}\ne q_{j}$ if $i\ne j$. For every $X\in {\mathcal A}$
we have
$$p_{i}\in X\;\Longleftrightarrow\; q_{i}\in f(X).$$
Also, $q_{i}\perp q_{j}$ if $j\ne \sigma(i)$
(we take any $X\in {\mathcal A}$ which contains $p_{i}$ and $p_{j}$,
then $q_{i}$ and $q_{j}$ both belong to $f(X)$).

\begin{lemma}\label{lemma4}
For any $\{i_{1},\dots, i_{n}\}\subset \{1,\dots,2n\}$
satisfying
$$\{i_{1},\dots, i_{n}\}\cap \{\sigma(i_{1}),\dots,\sigma(i_{n})\}=\emptyset$$
$\langle q_{i_{1}},\dots q_{i_{n}}\rangle $ is a maximal singular subspace of $\Pi$
and
$$f(\langle p_{i_{1}},\dots, p_{i_{n}}\rangle)=\langle q_{i_{1}},\dots, q_{i_{n}}\rangle.$$
\end{lemma}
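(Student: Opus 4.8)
The plan is to show directly that $\langle q_{i_1},\dots,q_{i_n}\rangle$ coincides with $f(X)$, where $X:=\langle p_{i_1},\dots, p_{i_n}\rangle$ is the element of ${\mathcal A}$ determined by the given index set. First I would record that $X\in{\mathcal A}$: the condition $\{i_1,\dots,i_n\}\cap\{\sigma(i_1),\dots,\sigma(i_n)\}=\emptyset$ means that exactly one index of each pair $\{j,\sigma(j)\}$ is selected, so $p_{i_1},\dots,p_{i_n}$ are mutually collinear and $X$ is a maximal singular subspace. Since each $p_{i_s}\in X$, the established equivalence $p_i\in X\Leftrightarrow q_i\in f(X)$ gives $q_{i_1},\dots,q_{i_n}\in f(X)$. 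As $f(X)$ is a maximal singular subspace, hence an $(n-1)$-dimensional projective space, the span $\langle q_{i_1},\dots,q_{i_n}\rangle$ is automatically a singular subspace contained in $f(X)$. Thus it suffices to prove that these $n$ points are independent, for then their span is $(n-1)$-dimensional and must equal $f(X)$.

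I would prove independence by induction on $k$, showing that $\langle q_{i_1},\dots,q_{i_k}\rangle$ has dimension $k-1$ for each $k\le n$. The case $k=1$ is trivial. For the inductive step, assume $\langle q_{i_1},\dots,q_{i_{k-1}}\rangle$ has dimension $k-2$; the point is to exhibit an element of ${\mathcal A}$ that contains all of $q_{i_1},\dots,q_{i_{k-1}}$ but misses $q_{i_k}$. The natural candidate is the ``partner swap'' $Y:=\langle p_{i_1},\dots,p_{i_{k-1}},p_{\sigma(i_k)},p_{i_{k+1}},\dots,p_{i_n}\rangle$. Replacing $i_k$ by $\sigma(i_k)$ again leaves exactly one representative of each $\sigma$-pair, so $Y$ satisfies the frame condition and $Y\in{\mathcal A}$. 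Since $p_{i_1},\dots,p_{i_{k-1}}\in Y$ while $p_{i_k}\notin Y$ (the subspace $Y$ contains the partner $p_{\sigma(i_k)}$, and a singular subspace cannot contain both), the equivalence yields $q_{i_1},\dots,q_{i_{k-1}}\in f(Y)$ and $q_{i_k}\notin f(Y)$. As $f(Y)$ is a subspace, $\langle q_{i_1},\dots,q_{i_{k-1}}\rangle\subset f(Y)$, whence $q_{i_k}\notin\langle q_{i_1},\dots,q_{i_{k-1}}\rangle$ and the dimension increases to $k-1$.

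Taking $k=n$ yields $\dim\langle q_{i_1},\dots,q_{i_n}\rangle=n-1$, so this span is a maximal singular subspace; being contained in the $(n-1)$-dimensional $f(X)$, it equals $f(X)=f(\langle p_{i_1},\dots,p_{i_n}\rangle)$, which proves both assertions at once. I expect the only genuine obstacle to be the inductive step, and specifically the choice of the witness $Y$: the whole argument hinges on realizing that swapping a single frame point for its $\sigma$-partner produces a legitimate apartment element whose image excludes precisely the one $q$-point we wish to separate from the others. Everything else is bookkeeping with the previously established properties of the $q_i$ and the distance formula.
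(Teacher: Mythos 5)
Your proof is correct, and it is a genuine (if modest) variant of the paper's argument. The paper proceeds by contradiction: supposing $q_{i_n}\in\langle q_{i_1},\dots,q_{i_{n-1}}\rangle$, it considers the two \emph{opposite} apartment elements $X=\langle p_{i_1},\dots,p_{i_{n-1}},p_{\sigma(i_n)}\rangle$ and $Y=\langle p_{\sigma(i_1)},\dots,p_{\sigma(i_{n-1})},p_{i_n}\rangle$; since $f$ preserves the maximal distance $n$, the images $f(X)$ and $f(Y)$ are disjoint, while $\langle q_{i_1},\dots,q_{i_{n-1}}\rangle\subset f(X)$ and $q_{i_n}\in f(Y)$, a contradiction. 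So the paper's separating tool is preservation of disjointness, combined with only the forward implication $p_i\in X\Rightarrow q_i\in f(X)$. Your separating tool is different: a single partner-swap witness per inductive step together with the full biconditional $p_i\in X\Leftrightarrow q_i\in f(X)$ (specifically its contrapositive, $p_{i_k}\notin Y\Rightarrow q_{i_k}\notin f(Y)$), which is legitimately available since it is established in the text immediately before the lemma. Note that your witness for the top step $k=n$ is exactly the paper's $X$. What your route buys: you never need the images of opposite apartment elements to be disjoint inside the lemma, and your induction makes the independence claim fully explicit, whereas the paper derives independence of all $n$ points from the single non-containment $q_{i_n}\notin\langle q_{i_1},\dots,q_{i_{n-1}}\rangle$ by an implicit appeal to symmetry of the indices. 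What the paper's route buys: brevity, since independence is handled in one stroke rather than in $n-1$ steps.
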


\begin{proof}
Suppose that $q_{i_{n}}$ belongs to
the singular subspace $\langle q_{i_{1}},\dots q_{i_{n-1}}\rangle$.
Let $X$ and $Y$ be the elements of ${\mathcal A}$ spanned by
$$p_{i_{1}},\dots, p_{i_{n-1}},p_{\sigma(i_{n})}\;\mbox{ and }\;
p_{\sigma(i_{1})},\dots, p_{\sigma(i_{n-1})},p_{i_{n}},$$
respectively.
These subspaces are disjoint and the same holds for $f(X)$ and $f(Y)$.
We have
$$\langle q_{i_{1}},\dots, q_{i_{n-1}}\rangle\subset f(X)\;
\mbox{ and }\; q_{i_{n}}\in f(Y)$$
which contradicts $q_{i_{n}}\in \langle q_{i_{1}},\dots, q_{i_{n-1}}\rangle$.

Therefore,
$q_{i_{1}},\dots, q_{i_{n}}$ form an independent subset of $\Pi$
and
$\langle q_{i_{1}},\dots, q_{i_{n}}\rangle$ is an element of ${\mathcal G}_{n-1}(\Pi)$.
Since $f(\langle p_{i_{1}},\dots, p_{i_{n}}\rangle)$ contains $q_{i_{1}},\dots, q_{i_{n}}$,
this subspace coincides with $\langle q_{i_{1}},\dots, q_{i_{n}}\rangle$.
\end{proof}

By Lemma \ref{lemma4}, every element of ${\mathcal X}$ is spanned by some
$q_{i_{1}},\dots, q_{i_{n}}$.
We need to show that the points $q_{1},\dots,q_{2n}$ form a frame of $\Pi$.
Since $q_{i}\perp q_{j}$ if $j\ne \sigma(i)$,
it is sufficient to establish that $q_{i}\not\perp q_{\sigma(i)}$ for all $i$.

Suppose that $q_{i}\perp q_{\sigma(i)}$ for a certain $i$.
Then $q_{i}\perp q_{j}$ for every $j\in \{1,\dots 2n\}$
and $q_{i}\perp X$ for all $X\in {\mathcal X}$.
Since ${\mathcal X}$ is formed by maximal singular subspaces of $\Pi$,
the second part of Lemma \ref{lemma0} implies that $q_{i}$ belongs to every element of ${\mathcal X}$.
Then the distance between any two elements of ${\mathcal X}$ is not greater
then $n-1$ which is impossible.

\section{Application of Theorem \ref{theorem2}}

\subsection{}
Let $\Pi=(P,{\mathcal L})$ and $\Pi'=(P',{\mathcal L}')$ be polar spaces of rank $n$ and $n'$,
respectively.
In this section Theorem \ref{theorem2} will be exploited to study isometric embeddings
of $\Gamma_{n-1}(\Pi)$ in $\Gamma_{n'-1}(\Pi')$.
The existence of such embeddings implies that $n$ (the diameter of $\Gamma_{n-1}(\Pi)$)
is not greater than $n'$ (the diameter of $\Gamma_{n-1}(\Pi')$).
So, we assume that $n\le n'$.

Suppose that $n=n'$.
Every mapping $f:P\to P'$ sending frames of $\Pi$ to frames of $\Pi'$
is an embedding of $\Pi$ in $\Pi'$ (Subsection 4.9.6 \cite{Pankov1});
moreover, for every singular subspace $S$ of $\Pi$
the subset $f(S)$ spans a singular subspace whose dimension is equal to the dimension of
$S$.
The mapping of ${\mathcal G}_{n-1}(\Pi)$ to ${\mathcal G}_{n-1}(\Pi')$
transferring every $S$ to $\langle f(S)\rangle$ is an isometric embedding
of $\Gamma_{n-1}(\Pi)$ in $\Gamma_{n-1}(\Pi')$.

Now consider the general case.
Let $M$ be an $(n'-n-1)$-dimensional singular subspace of $\Pi'$.
By Subsection 2.5, $[M\rangle_{n'-n}$ is a polar space of rank $n$
and $[M\rangle_{n'-1}$ can be identified with the associated dual polar space
(if $n=n'$ then $M=\emptyset$ and $[M\rangle_{n'-n}$ coincides with $P$).
As above, every frames preserving mapping of $\Pi$ to $[M\rangle_{n'-n}$
(a mapping which sends frames to frames)
induces an isometric embedding of $\Gamma_{n-1}(\Pi)$ in $\Gamma_{n'-1}(\Pi')$.

\begin{theorem}\label{theorem3}
Let $f:{\mathcal G}_{n-1}(\Pi)\to {\mathcal G}_{n'-1}(\Pi')$
be an isometric embedding of $\Gamma_{n-1}(\Pi)$ in $\Gamma_{n'-1}(\Pi')$.
There exists  an $(n'-n-1)$-dimensional singular subspace $M$ of $\Pi'$
such that the image of $f$ is contained in $[M\rangle_{n'-1}$
and $f$ is induced by a frames preserving mapping of $\Pi$ to $[M\rangle_{n'-n}$.
\end{theorem}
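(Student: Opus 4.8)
The plan is to reduce Theorem \ref{theorem3} to the equidimensional case $n=n'$ by first showing that the whole image of $f$ is contained in a single parabolic subspace $[M\rangle_{n'-1}$ with $\dim M=n'-n-1$, and then to reconstruct the point map $g$ from Theorem \ref{theorem2}, applied apartment by apartment. The first step is where I expect the real difficulty to lie.

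First I would restrict $f$ to apartments. For any apartment ${\mathcal A}$ of ${\mathcal G}_{n-1}(\Pi)$ the restriction of $f$ is an isometric embedding of $H_{n}$ in $\Gamma_{n'-1}(\Pi')$, so by Theorem \ref{theorem2} its image is an apartment of a parabolic subspace $[M_{\mathcal A}\rangle_{n'-1}$ with $\dim M_{\mathcal A}=n'-n-1$; moreover, by Lemmas \ref{lemma1} and \ref{lemma2}, one has $M_{\mathcal A}=f(v)\cap f(w)$ for any opposite $v,w\in{\mathcal A}$ and $M_{\mathcal A}\subseteq f(u)$ for every $u\in{\mathcal A}$. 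The heart of the proof, and the step I expect to be the main obstacle, is to show that $M_{\mathcal A}$ does not depend on ${\mathcal A}$; equivalently, that $f(X)\cap f(Y)$ is one fixed subspace $M$ for every pair of disjoint (opposite) $X,Y\in{\mathcal G}_{n-1}(\Pi)$.

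To settle this I would establish the invariance of $f(X)\cap f(Y)$ under an elementary move: replacing $Y$ by an adjacent $Y'$ that is still opposite to $X$. Let $\ell$ be the line of the dual polar space through $Y$ and $Y'$ and let $Z_{0}\in\ell$ be the element nearest to $X$; the ``one or all'' polar axiom gives $d(X,Z_{0})=n-1$ while $d(X,Z)=n$ for every other $Z\in\ell$, so $Z_{0}$ lies on a geodesic from $X$ to $Y$ and on one from $X$ to $Y'$. Lemma \ref{lemma1} then yields $f(X)\cap f(Y)\subseteq f(Z_{0})$, hence $f(X)\cap f(Y)=\bigl(f(X)\cap f(Z_{0})\bigr)\cap f(Y)$. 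Since the dual polar space of $\Pi'$ contains no triangles (a direct consequence of the second part of Lemma \ref{lemma0}), the pairwise adjacent elements $f(Z_{0}),f(Y),f(Y')$ lie on a common line, so $f(Z_{0})\cap f(Y)=f(Z_{0})\cap f(Y')=:N'$, and therefore $f(X)\cap f(Y)=\bigl(f(X)\cap f(Z_{0})\bigr)\cap N'=f(X)\cap f(Y')$. By symmetry the intersection is also invariant when $X$ is moved. As any two pairs of opposite elements are joined by a chain of such elementary moves, $f(X)\cap f(Y)$ equals a fixed $M$ for all opposite $X,Y$. Placing an arbitrary $Z$ in an apartment and applying Lemmas \ref{lemma1} and \ref{lemma2} to the opposite vertices of that apartment shows $M\subseteq f(Z)$ for every $Z$; thus the image of $f$ lies in $[M\rangle_{n'-1}$. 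The delicate points here are the no-triangles property and the connectedness of the system of opposite pairs under elementary moves.

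Finally, identifying $[M\rangle_{n'-1}$ with the dual polar space of the rank-$n$ polar space $[M\rangle_{n'-n}$ (Subsection 2.5), $f$ becomes an isometric embedding between dual polar spaces of equal rank. I would then set $g(p):=\bigcap_{X\in[p\rangle_{n-1}}f(X)$ for every point $p$ of $\Pi$. The restriction of $f$ to the rank-$(n-1)$ dual polar space $[p\rangle_{n-1}$ is isometric, since Lemma \ref{lemma1} keeps geodesics between its elements inside $[p\rangle_{n-1}$, so the same opposite-pair argument applied one rank lower shows that $g(p)$ is a single point of $[M\rangle_{n'-n}$ and that $p\in X\Leftrightarrow g(p)\in f(X)$. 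Then Theorem \ref{theorem2} applied to each apartment shows that $g$ carries frames of $\Pi$ to frames of $[M\rangle_{n'-n}$ and that $f(\langle p_{i_{1}},\dots,p_{i_{n}}\rangle)=\langle g(p_{i_{1}}),\dots,g(p_{i_{n}})\rangle$; since every element of ${\mathcal G}_{n-1}(\Pi)$ is spanned by a subset of some frame, $f$ is induced by $g$, and by Subsection 4.9.6 of \cite{Pankov1} the frames preserving map $g$ is the required embedding.
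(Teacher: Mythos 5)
Your elementary-move lemma is correct and genuinely elegant: the nearest-point (near-polygon) property of $\Gamma_{n-1}(\Pi)$ together with the no-triangle property of the dual polar space of $\Pi'$ (both consequences of Lemma \ref{lemma0}) do give $f(X)\cap f(Y)=f(X)\cap f(Y')$ whenever $Y'$ is adjacent to $Y$ and both are opposite to $X$. The gap is exactly the sentence you flag as delicate: ``any two pairs of opposite elements are joined by a chain of such elementary moves.'' This is not merely unproven --- it is false under the paper's hypotheses. The axioms of Subsection 2.4 admit polar spaces of hyperbolic type $\textsf{D}_{n}$, in which every $(n-2)$-dimensional singular subspace lies in exactly two maximal singular subspaces. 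For such $\Pi$ there are no elementary moves at all: if $Y,Y'$ are adjacent, put $U=Y\cap Y'$ and let $X$ be any maximal singular subspace disjoint from both; your own nearest-point argument produces a point $q\in X$ with $q\perp U$, so $\langle U,q\rangle$ is a maximal singular subspace containing $U$, hence $\langle U,q\rangle\in\{Y,Y'\}$ and $X$ meets $Y$ or $Y'$ --- a contradiction. So no two adjacent elements are both opposite to a common $X$, the move graph on opposite pairs has no edges whatsoever, and the chain argument collapses. Even in the thick case (every $(n-2)$-dimensional singular subspace in at least three maximals), the connectedness of the set of elements opposite to a fixed $X$ is a genuine theorem about geometries far from a residue, and nothing in the paper provides it. The same defect propagates into your construction of the point map $g$, whose well-definedness you justify by ``the same opposite-pair argument one rank lower.''

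The paper's Lemma \ref{lemma5} is engineered precisely to avoid this. It fixes one opposite pair $(X_{0},Y_{0})$, walks along an \emph{arbitrary} path $X_{0},X_{1},\dots,X_{m}=X$, and at each step proves $M\subset f(X_{i})$ directly by a two-case analysis: if $d(X_{1},Y_{0})=n-1$, take a geodesic through $X_{0},X_{1},Y_{0}$ and apply Lemma \ref{lemma1}; if $X_{1}$ is opposite to $Y_{0}$, use the $\perp$-argument with Lemma \ref{lemma0}. The new partner $Y_{1}$ is then produced from a frame containing $X_{1}$ and a hyperplane of $Y_{0}$, and is only required to be adjacent to $Y_{0}$ --- not opposite to $X_{0}$ --- so no far-graph connectivity is ever invoked, and the type $\textsf{D}_{n}$ case is covered (there the case ``$X_{1}$ opposite to $Y_{0}$'' simply never occurs). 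To repair your proof you would either have to restrict to polar spaces with thick dual polar lines and import the far-graph connectivity theorem, or replace the chain argument by a path-following scheme of the paper's kind. Your concluding step (image inside $[M\rangle_{n'-1}$, apartments to apartments by Theorem \ref{theorem2}, then the induced frames preserving map via Theorem \ref{theorem4}, i.e.\ Subsection 4.9.6 of \cite{Pankov1}) agrees with the paper and is fine once the containment in $[M\rangle_{n'-1}$ is actually established.
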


\begin{rem}{\rm
Theorem \ref{theorem3} generalizes classical Chow's theorem \cite{Chow}:
if $n=n'$ then every isomorphism of $\Gamma_{n-1}(\Pi)$ to $\Gamma_{n-1}(\Pi')$
is induced by a collineation of $\Pi$ to $\Pi'$.
In \cite{Chow} this theorem was proved for the dual polar spaces
of non-degenerate reflexive forms; but Chow's method works in
the general case (Section 4.6.4 \cite{Pankov1}).
Some interesting  results concerning adjacency preserving transformations of
symplectic dual polar spaces were established in \cite{Huang1,Huang2}.
}\end{rem}

\subsection{Proof of Theorem \ref{theorem3}}
We will use the following.

\begin{theorem}\label{theorem4}
Let $M$ be an $(n'-n-1)$-dimensional singular subspace of $\Pi'$.
Every  mapping
$f:{\mathcal G}_{n-1}(\Pi)\to [M\rangle_{n'-1}$
sending apartments of ${\mathcal G}_{n-1}(\Pi)$ to apartments of
$[M\rangle_{n'-1}$ is induced by a frames preserving mapping of $\Pi$ to $[M\rangle_{n'-n}$.
\end{theorem}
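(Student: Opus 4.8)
The plan is to reduce to the case $n=n'$ and then to build the required point map by re-running the argument used in the proof of Theorem \ref{theorem2}. By Subsection 2.5 the parabolic subspace $[M\rangle_{n'-1}$ may be identified with the dual polar space of the rank $n$ polar space $[M\rangle_{n'-n}$, and under this identification apartments correspond to apartments; moreover a frames preserving map from $\Pi$ to $[M\rangle_{n'-n}$ is exactly what we must produce. So I first replace $\Pi'$ by the rank $n$ polar space $\Pi'':=[M\rangle_{n'-n}$ and treat $f:{\mathcal G}_{n-1}(\Pi)\to{\mathcal G}_{n-1}(\Pi'')$ sending apartments to apartments. Since any two maximal singular subspaces lie in a common apartment (a standard property of the apartment system, see \cite{Pankov1}) and every apartment has exactly $2^{n}$ elements, the image $f({\mathcal A})$ of an apartment, being an apartment, has the same cardinality as ${\mathcal A}$; hence $f$ is injective and restricts to a bijection of each apartment onto the corresponding image apartment.

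Next, fix a frame $B=\{p_{1},\dots,p_{2n}\}$ of $\Pi$ with associated apartment ${\mathcal A}$ and put ${\mathcal X}:=f({\mathcal A})$. I would reproduce the bookkeeping of the proof of Theorem \ref{theorem2}: produce points $q_{1},\dots,q_{2n}$ of $\Pi''$ with $f({\mathcal A}\cap[p_{i}\rangle_{n-1})={\mathcal X}\cap[q_{i}\rangle_{n-1}$ for every $i$, show that $q_{1},\dots,q_{2n}$ form a frame of $\Pi''$, and deduce from Lemma \ref{lemma4} that $f(\langle p_{i_{1}},\dots,p_{i_{n}}\rangle)=\langle q_{i_{1}},\dots,q_{i_{n}}\rangle$. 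Setting $g(p_{i}):=q_{i}$ then describes $f$ on ${\mathcal A}$ by a point map. To globalise, I use that every point of $\Pi$ belongs to some frame and that each parabolic subspace $[p\rangle_{n-1}$ is connected: the restriction of $f$ to $[p\rangle_{n-1}$ is again an apartment preserving map between rank $n-1$ dual polar spaces, so an induction on $n$ shows that the value $g(p)$ obtained from different frames through $p$ agrees, giving a well defined point map $g$ from $P$ to the point set of $\Pi''$. A final check that $g$ carries frames of $\Pi$ to frames of $\Pi''$ and induces $f$ completes the argument; undoing the identification places the image of $f$ in $[M\rangle_{n'-1}$ as required.

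The main obstacle is that the construction of the $q_{i}$ above rests on two metric facts which, in the proof of Theorem \ref{theorem2}, were free because $f$ was assumed isometric: that opposite vertices of an apartment go to disjoint (opposite) maximal singular subspaces, and that each facet ${\mathcal A}\cap[p_{i}\rangle_{n-1}$ is sent into a single parabolic subspace $[q_{i}\rangle_{n-1}$. Under the present, weaker hypothesis these must be recovered from apartment preservation alone, and this is where the real work lies. My plan is to prove that an apartment preserving injection necessarily preserves adjacency in $\Gamma_{n-1}(\Pi)$ (equivalently, collinearity in the dual polar space). I would argue by induction on $n$ through parabolic subspaces, the base case $n=2$ (generalised quadrangles, whose apartments are ordinary quadrangles) being treated by hand with the help of Lemmas \ref{lemma0}, \ref{lemma1} and \ref{lemma2} together with the elementary fact that a line of the dual polar space meets each apartment in at most two elements. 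Once adjacency is preserved, $f$ sends every line (a maximal singular subspace of the dual polar space) into a line, so $f$ is an embedding of the dual polar space of $\Pi$ into that of $\Pi''$; the classification of such embeddings (\cite{Pankov1}, generalising Chow's theorem \cite{Chow}) then yields the frames preserving map $g$. I expect this adjacency preservation step, which must convert information carried at the level of whole apartments into information about single edges, to be the hardest part of the proof.
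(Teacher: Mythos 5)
First, note what the paper itself does: its entire proof of this statement is the citation ``This follows from Theorem 4.17 \cite{Pankov1} (Subsection 4.9.6).'' The result is imported, not proved, in this paper, so any self-contained argument is necessarily a different route. Your reduction to $n=n'$ via the identification of $[M\rangle_{n'-1}$ with the dual polar space of $[M\rangle_{n'-n}$, and your injectivity argument (two maximal singular subspaces lie in a common apartment, and apartments have exactly $2^{n}$ elements), are both correct. But the proof has a genuine gap exactly where you say the real work lies, and flagging a gap does not fill it. The hypothesis only says that the \emph{set} $f({\mathcal A})$ is an apartment; it says nothing about how ${\mathcal A}$ is mapped onto it, and a bijection between two apartments need not respect their internal hypercube structure (facets to facets, opposite pairs to disjoint pairs). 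In the proof of Theorem \ref{theorem2} the points $q_{i}$ exist because of Lemma \ref{lemma3}, whose proof runs through Lemmas \ref{lemma1} and \ref{lemma2} and hence uses the isometric hypothesis that is absent here. Your replacement plan --- prove adjacency preservation by induction on $n$ through parabolic subspaces, base case $n=2$ ``treated by hand'' --- is stated but never executed, and as stated it is circular: to view the restriction of $f$ to $[p\rangle_{n-1}$ as an apartment-preserving map between rank $n-1$ dual polar spaces, you must already know that $f([p\rangle_{n-1})$ lies in a single parabolic subspace $[q\rangle_{n-1}$ and that facets ${\mathcal A}\cap[p\rangle_{n-1}$ go to facets of image apartments --- which is precisely the claim under proof. (In \cite{Pankov1} this step is handled by a substantial combinatorial analysis of intersections of apartments, the so-called inexact and complement subsets; nothing of that kind appears in your sketch.)

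The two later steps are also unsupported. The well-definedness of $g(p)$ across different frames through $p$ is asserted via ``an induction on $n$'' that suffers from the same circularity; note that the hypercube property of Lemma \ref{lemma2} (every vertex lies on a geodesic between a fixed opposite pair) fails for the full dual polar space, so this cannot be waved through by convexity alone. Finally, the closing appeal --- once adjacency preservation is in hand, invoke ``the classification of such embeddings'' --- has no non-circular target: Chow's theorem \cite{Chow} concerns adjacency-preserving bijections in both directions; the classification of isometric embeddings is Theorem \ref{theorem3} of this very paper, whose proof rests on Theorem \ref{theorem4}; and the classification of apartment-preserving maps is Theorem 4.17 of \cite{Pankov1}, i.e.\ the citation that constitutes the paper's entire proof. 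So your argument either leaves its central step open or collapses back to the external reference it was trying to avoid.
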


\begin{proof}
This follows from Theorem 4.17 \cite{Pankov1} (Subsection 4.9.6).
\end{proof}

Theorem \ref{theorem3} will be a consequence of Theorems
\ref{theorem2}, \ref{theorem4} and the following lemma.

\begin{lemma}\label{lemma5}
Let $f:{\mathcal G}_{n-1}(\Pi)\to {\mathcal G}_{n'-1}(\Pi')$
be an isometric embedding of $\Gamma_{n-1}(\Pi)$ in $\Gamma_{n'-1}(\Pi')$.
There exists  an $(n'-n-1)$-dimensional singular subspace $M$ of $\Pi'$
such that the image of $f$ is contained in $[M\rangle_{n'-1}$.
\end{lemma}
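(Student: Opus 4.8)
The plan is to show that the image of the whole dual polar space $\Gamma_{n-1}(\Pi)$ lies inside a single parabolic subspace $[M\rangle_{n'-1}$ by exhibiting $M$ as a common intersection of the images of maximal singular subspaces lying at maximal mutual distance. Concretely, since $\Gamma_{n-1}(\Pi)$ has diameter $n$, I would first pick two opposite vertices $S, U \in {\mathcal G}_{n-1}(\Pi)$, so $d(S,U)=n$ and hence $d(f(S),f(U))=n$ in $\Gamma_{n'-1}(\Pi')$. By the distance formula (Subsection 2.5), $f(S)\cap f(U)$ is an $(n'-1-n)=(n'-n-1)$-dimensional singular subspace of $\Pi'$; call it $M$. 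This $M$ is the natural candidate.

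The key step will be to verify that $M\subset f(X)$ for \emph{every} $X\in {\mathcal G}_{n-1}(\Pi)$, which is precisely the statement that the image lies in $[M\rangle_{n'-1}$. To do this I would exploit the apartment structure together with Theorem~\ref{theorem2}. Given an arbitrary $X$, I would first reduce to the case where $X$ lies on a geodesic in $\Gamma_{n-1}(\Pi)$ between a pair of opposite vertices; this is available because, as in Lemma~\ref{lemma2}, in the dual polar space any vertex lies on a geodesic joining some pair of opposite vertices. Applying an isometric embedding, $f(X)$ then lies on a geodesic in $\Gamma_{n'-1}(\Pi')$ between the $f$-images of a pair of opposite vertices whose intersection is an $(n'-n-1)$-dimensional subspace, and Lemma~\ref{lemma1} forces that intersection to be contained in $f(X)$. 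The remaining difficulty is that different pairs of opposite vertices could, a priori, produce \emph{different} $(n'-n-1)$-dimensional subspaces, so I must show they all coincide with the fixed $M$.

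To pin down this coincidence I would argue that the family of opposite-pair intersections is constant. First observe that if $S,U$ and $S',U'$ are two pairs of opposite vertices in ${\mathcal G}_{n-1}(\Pi)$, they can be connected through a chain of opposite pairs obtained by moving one endpoint at a time along an edge while keeping the distance maximal; at each such step the corresponding intersection subspace in $\Pi'$ cannot change, since an elementary computation with the distance formula shows that two $(n'-n-1)$-dimensional subspaces both contained in overlapping $n$-geodesics must agree. Combining this with the previous paragraph gives $M\subset f(X)$ for all $X$, which is the desired conclusion. \textbf{The main obstacle} I anticipate is exactly this last independence-of-the-pair argument: ensuring that the candidate subspace $M$ extracted from one pair of opposite vertices is forced to coincide with the intersection coming from every other pair, rather than merely being contained in each individual image. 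I expect Lemma~\ref{lemma1} (geodesics preserve the common intersection) to be the decisive tool here, applied repeatedly along a connecting chain of opposite pairs.
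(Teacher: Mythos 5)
Your skeleton coincides with the paper's: fix an opposite pair $X_{0},Y_{0}$, set $M=f(X_{0})\cap f(Y_{0})$, and propagate $M$ into every $f(X)$ with the help of Lemma~\ref{lemma1}. But there is a genuine gap at exactly the step you flag as the main obstacle, and the tool you propose cannot close it. When a pair moves from $(X_{0},Y_{0})$ to $(X_{1},Y_{0})$ with $X_{1}$ adjacent to $X_{0}$ and both pairs opposite in $\Gamma_{n-1}(\Pi)$, you claim $f(X_{0})\cap f(Y_{0})=f(X_{1})\cap f(Y_{0})$ follows from ``an elementary computation with the distance formula.'' It does not. The distance formula only yields dimensions: $\dim(f(X_{0})\cap f(Y_{0}))=\dim(f(X_{1})\cap f(Y_{0}))=n'-n-1$ and $\dim(f(X_{0})\cap f(X_{1}))=n'-2$; this bookkeeping is perfectly consistent with the two intersections being distinct (each may meet the hyperplane $f(X_{0})\cap f(X_{1})$ in codimension one). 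Lemma~\ref{lemma1} does not apply either: the concatenation $f(X_{0}),f(X_{1}),\dots,f(Y_{0})$ has length $n+1$, so it is not a geodesic, and there is no single geodesic containing all three subspaces. The paper closes precisely this case non-metrically, via Lemma~\ref{lemma0}: if $p\in(f(X_{1})\cap f(Y_{0}))\setminus M$, then $p\notin f(X_{0})\cap f(X_{1})$, so $f(X_{1})$ is spanned by $f(X_{0})\cap f(X_{1})$ and $p$; since $M\perp f(X_{0})\cap f(X_{1})$ (both lie in the singular subspace $f(X_{0})$) and $M\perp p$ (both lie in $f(Y_{0})$), one gets $M\perp f(X_{1})$, hence $M\subset f(X_{1})$ by maximality. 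Some such incidence-geometric argument is unavoidable; it is not a distance computation.

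A second, smaller gap: your chain of opposite pairs ``moving one endpoint at a time while keeping the distance maximal'' is asserted, not proved, and single-coordinate moves may simply be unavailable, since for $X_{1}$ adjacent to $X_{0}$ the pair $(X_{1},Y_{0})$ need not be opposite (one can have $d(X_{1},Y_{0})=n-1$). The paper's induction handles this case separately: when $d(X_{1},Y_{0})=n-1$ there is an honest geodesic through $X_{0},X_{1},\dots,Y_{0}$, so Lemma~\ref{lemma1} gives $M\subset f(X_{1})$ directly; then a new partner $Y_{1}$ opposite to $X_{1}$ and adjacent to $Y_{0}$ is constructed explicitly, by taking a hyperplane $U\subset Y_{0}$ missing the point $X_{1}\cap Y_{0}$ and a frame spanning $X_{1}$ and $U$ (Proposition 4.7 of \cite{Pankov1}), after which $f(X_{1})\cap f(Y_{1})=M$ follows by a dimension count, and the induction proceeds along an arbitrary path from $X_{0}$. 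Your proposal becomes a proof once you replace the ``distance formula'' justification by the Lemma~\ref{lemma0} argument and supply this construction of successive opposite partners; neither repair is cosmetic.
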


\begin{proof}
Let $X_{0}$ and $Y_{0}$ be opposite vertices of $\Gamma_{n-1}(\Pi)$.
Then
$$d(f(X_{0}),f(Y_{0}))=n$$
and, by the distance formula (Subsection 2.5),
$$M:=f(X_{0})\cap f(Y_{0})$$
is an $(n'-n-1)$-dimensional singular subspace of $\Pi'$.
Let $X\in {\mathcal G}_{n-1}(\Pi)$ and let
$$X_{0},X_{1},\dots,X_{m}=X$$
be a path in $\Gamma_{n-1}(\Pi)$ connecting $X_{0}$ with $X$.
We show that every $f(X_{i})$ belongs to $[M\rangle_{n'-1}$.

It is clear that $X_{1}$ is opposite to $Y_{0}$ or $d(X_{1},Y_{0})=n-1$.
In the second case, we take a geodesic of $\Gamma_{n-1}(\Pi)$
containing $X_{0},X_{1},Y_{0}$.
The mapping $f$ transfers it to a geodesic of $\Gamma_{n'-1}(\Pi')$
containing $f(X_{0}),f(X_{1}),f(Y_{0})$.
Lemma \ref{lemma1} guarantees that
$f(X_{1})\in [M\rangle_{n'-1}$.

In the case when $X_{1}$ is opposite to $Y_{0}$, we have
$$\dim(f(X_{1})\cap f(Y_{0}))=n'-n-1.$$
If the subspace $f(X_{1})\cap f(Y_{0})$ coincides with $M$ then  $M\subset f(X_{1})$.
If these subspaces are distinct then there exists a point
$$p\in (f(X_{1})\cap f(Y_{0}))\setminus M.$$
This point does not belong to $f(X_{0})\cap f(X_{1})$
(otherwise $p\in f(X_{0})\cap f(Y_{0})=M$ which is impossible).
Thus $f(X_{1})$ is spanned by the $(n'-2)$-dimensional singular subspace $f(X_{0})\cap f(X_{1})$
and the point $p$.
As in the proof of Lemma \ref{lemma1},
$$f(X_{0})\cap f(X_{1})\perp M\;\mbox{ and }\; p\perp M.$$
Then $M\perp f(X_{1})$ and $M\subset f(X_{1})$.

So, for every $X\in {\mathcal G}_{n-1}(\Pi)$
adjacent to $X_{0}$ we have $f(X)\in[M\rangle_{n'-1}$.
The same holds for every $X\in {\mathcal G}_{n-1}(\Pi)$  adjacent to
$Y_{0}$ (the proof is similar).

Now we establish the existence of $Y_{1}\in {\mathcal G}_{n-1}(\Pi)$
opposite to $X_{1}$ and satisfying $f(Y_{1})\in[M\rangle_{n'-1}$.

It was noted above that $X_{1}$ is  opposite to $Y_{0}$  or $d(X_{1},Y_{0})=n-1$.
If $X_{1}$ and $Y_{0}$ are opposite then $Y_{1}=Y_{0}$ is as required.
In the case when $d(X_{1},Y_{0})=n-1$,
the intersection of $X_{1}$ and $Y_{0}$ is a single point.
We take any $(n-2)$-dimensional singular subspace $U\subset Y_{0}$
which does not contain this point.
There exists a frame of $\Pi$ whose subsets span $X_{1}$ and $U$
(see, for example, Proposition 4.7 \cite{Pankov1}).
The associated apartment of ${\mathcal G}_{n-1}(\Pi)$
contains an element $Y_{1}$ such that $U\subset Y_{1}$ and $X_{1}\cap Y_{1}=\emptyset$.
It is clear that $Y_{0}$ and $Y_{1}$ are adjacent; hence $f(Y_{1})\in[M\rangle_{n'-1}$.

Since
$$d(f(X_{1}),f(Y_{1}))=n,$$
the subspace $f(X_{1})\cap f(Y_{1})$ is $(n'-n-1)$-dimensional.
On the other hand, $f(X_{1})$ and $f(Y_{1})$ both belong to $[M\rangle_{n'-1}$
and we get
$$f(X_{1})\cap f(Y_{1})=M.$$
We apply the arguments  given above to $X_{1},Y_{1},X_{2}$
instead of $X_{0},Y_{0},X_{1}$ and establish that $f(X_{2})\in[M\rangle_{n'-1}$.
Step by step, we show that each $f(X_{i})$ belongs to $[M\rangle_{n'-1}$.
\end{proof}

Let $f$ be, as above, an isometric embedding of $\Gamma_{n-1}(\Pi)$ in $\Gamma_{n'-1}(\Pi')$.
Lemma \ref{lemma5} implies the existence of an $(n'-n-1)$-dimensional singular
subspace $M$ of $\Pi'$ such that the image of $f$ is contained in $[M\rangle_{n'-1}$.
By Theorem \ref{theorem2},
$f$ transfers apartments of ${\mathcal G}_{n-1}(\Pi)$ to apartments of $[M\rangle_{n'-1}$.
Theorem \ref{theorem4} gives the claim.

\end{document}